\begin{document}
\title{On Gregus-\'{C}iri\'{c} mappings on weighted graphs}
\author{M. R. Alfuraidan $\&$ M. A. Khamsi}

\address{Monther Rashed Alfuraidan\\Department of Mathematics \& Statistics \\
King Fahd University of Petroleum and Minerals\\
Dhahran 31261, Saudi Arabia.}
\email{monther@kfupm.edu.sa}
\address{Mohamed Amine Khamsi\\Department of Mathematical Sciences, University of Texas at El Paso, El Paso, TX 79968, USA.}
\email{mohamed@utep.edu}

\subjclass[2010]{Primary 47H09; Secondary 47H10}
\keywords{Fixed point, Gregus-\'{C}iri\'{c}-contraction, monotone mappings, weighted graph.}

\maketitle
\newtheorem{theorem}{Theorem}[section]
\newtheorem{acknowledgement}{Acknowledgement}
\newtheorem{algorithm}{Algorithm}
\newtheorem{axiom}{Axiom}
\newtheorem{case}{Case}
\newtheorem{claim}{Claim}
\newtheorem{conclusion}{Conclusion}
\newtheorem{condition}{Condition}
\newtheorem{conjecture}{Conjecture}
\newtheorem{corollary}{Corollary}[section]
\newtheorem{criterion}{Criterion}
\newtheorem{definition}{Definition}[section]
\newtheorem{example}{Example}[section]
\newtheorem{exercise}{Exercise}
\newtheorem{lemma}{Lemma}[section]
\newtheorem{notation}{Notation}
\newtheorem{problem}{Problem}
\newtheorem{proposition}{Proposition}[section]
\newtheorem{remark}{Remark}[section]
\newtheorem{solution}{Solution}
\newtheorem{summary}{Summary}
\newtheorem{property}{Property}

\thispagestyle{empty}

\begin{abstract}
\noindent \itshape In this paper, we introduce the concept of monotone Gregus-\'{C}iri\'{c}-contraction mappings in weighted digraphs.  Then we establish a fixed point theorem for monotone Gregus-\'{C}iri\'{c}-contraction mappings defined in convex weighted digraphs.
\end{abstract}

\section{Introduction}
Banach's Contraction Principle \cite{banach} is perhaps the most widely applied fixed point theorem in all of analysis. Over the years, many mathematicians tried successfully to extend this fundamental theorem.  In 1980, Gregus \cite{gregus} proved the following result:

\begin{theorem} Let $X$ be a Banach space and $C$ be a nonempty closed and convex subset of $X$. Let $T: C\rightarrow C$ be a mapping satisfying
$$\| T(x)-T(y)\| \le a\|x-y\|+p\|T(x)-x\|+p\|T(y)-y\|,$$
for all $x,y\in C$, where $0<a<1$, $p\ge 0$ and $a+2p=1$. Then $T$ has a unique fixed point.
\end{theorem}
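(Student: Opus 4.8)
The plan is as follows. The map $T$ need not be a contraction (when $p>0$ the displacement terms on the right can dominate), and indeed for the Picard iterates $x_n=T^n(x_0)$ one only gets $\|x_{n+1}-x_n\|\le\|x_n-x_{n-1}\|$: applying the hypothesis to $x_{n-1},x_n$ gives $(1-p)\|x_{n+1}-x_n\|\le(a+p)\|x_n-x_{n-1}\|$, and the relation $a+2p=1$ forces the ratio $(a+p)/(1-p)$ to be exactly $1$. So the convexity of $C$ must be used. I would first isolate the following elementary consequence of the hypothesis, call it $(\ast)$: for $\varepsilon>0$ the set $F_\varepsilon=\{x\in C:\|x-T(x)\|\le\varepsilon\}$ has diameter at most $\tfrac{1+p}{p}\,\varepsilon$. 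Indeed, for $x,y\in F_\varepsilon$ one has $\|x-y\|\le\|x-T(x)\|+\|T(x)-T(y)\|+\|T(y)-y\|\le(1+p)\varepsilon+a\|x-y\|+(1+p)\varepsilon$, so $(1-a)\|x-y\|\le 2(1+p)\varepsilon$, and $1-a=2p$ gives $(\ast)$.

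Granting the central claim that $\inf_{x\in C}\|x-T(x)\|=0$, the theorem follows quickly. Choose $x_n\in C$ with $\varepsilon_n:=\|x_n-T(x_n)\|\to 0$. For $n,m\ge N$ both $x_n,x_m$ lie in $F_{\delta_N}$ with $\delta_N=\sup_{k\ge N}\varepsilon_k\to 0$, so by $(\ast)$ the sequence $(x_n)$ is Cauchy; since $C$ is closed it converges to some $\bar x\in C$. From $\|\bar x-T(\bar x)\|\le\|\bar x-x_n\|+\|x_n-T(x_n)\|+\|T(x_n)-T(\bar x)\|$ and the hypothesis applied to the last term one gets $(1-p)\|\bar x-T(\bar x)\|\le(1+a)\|\bar x-x_n\|+(1+p)\varepsilon_n\to 0$; since $p<\tfrac12$ this forces $T(\bar x)=\bar x$. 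Uniqueness is immediate: if $T(x^{*})=x^{*}$ and $T(y^{*})=y^{*}$ then $\|x^{*}-y^{*}\|=\|T(x^{*})-T(y^{*})\|\le a\|x^{*}-y^{*}\|$, so $x^{*}=y^{*}$ because $a<1$.

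It remains to prove $\inf_{x\in C}\|x-T(x)\|=0$, and this is where I expect the real work. I would run the averaged (Krasnoselskii) iteration $x_{n+1}=\tfrac12\bigl(x_n+T(x_n)\bigr)$, which stays in $C$ by convexity, and try to prove $\|x_n-T(x_n)\|\to 0$ — or, what amounts to much the same thing, repeatedly insert midpoints $\tfrac12\bigl(T^i(x_0)+T^j(x_0)\bigr)$ of orbit points into the contractive inequality. As a preliminary, the Picard orbit $\{T^n(x_0)\}$ can be shown bounded (the estimate $\|x_0-T^n(x_0)\|\le\tfrac{2-a}{1-a}\,\|x_0-T(x_0)\|$ follows by induction from the hypothesis), and $d_n=\|T^{n+1}(x_0)-T^n(x_0)\|$ decreases to some $d\ge 0$; the goal is $d=0$. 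The main obstacle is precisely that the naive triangle-inequality estimate for $\|z-T(z)\|$ when $z$ is such a midpoint only yields $\|z-T(z)\|\le\tfrac{1}{1-p}\|x-T(x)\|$, i.e. no contraction at all — because $a+2p=1$ collapses the coefficients ($1-p=a+p$ and $1+a+2p=2$), so the factor $\tfrac12$ gained from passing to a midpoint is exactly eaten up. The delicate point will be to choose the auxiliary points and combine the inequalities so that the strict inequality $a<1$ survives this cancellation and produces a genuinely self-improving bound, forcing $d=0$; the bounded closed convex hull of the orbit together with $(\ast)$ should be the right tools to push this through.
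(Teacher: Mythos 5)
Your reductions are all sound: the diameter bound $(\ast)$ on $F_\varepsilon$, the Cauchy argument, the passage to the limit, and the uniqueness step are correct, and your diagnosis of the difficulty --- that the Picard iteration is only nonexpansive because $a+2p=1$ makes $(a+p)/(1-p)=1$, and that a naive midpoint insertion loses the contraction for the same reason --- is accurate. But the proof is not complete: the central claim $\inf_{x\in C}\|x-T(x)\|=0$ is precisely where all the weight lies, and you leave it as a programme (``the delicate point will be to choose the auxiliary points\dots'') rather than an argument. As written, nothing forces $d=\lim_n\|T^{n+1}(x_0)-T^n(x_0)\|$ to be zero, so the theorem is not established.

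The paper does not reprove Gregus's theorem (it quotes it from \cite{gregus}), but its main result, Theorem \ref{main}, contains it as a special case (take $E(G)=V(G)\times V(G)$, $b=2p$, $c=0$), and the machinery there shows how to close exactly the gap you identified. The device is not to average consecutive orbit points. One first proves (Lemma \ref{lemma3}) that for some $n\ge 1$ the two-step displacement satisfies $d(T^n(x),T^{n+2}(x))\le\frac{2}{2-a}\,d(x,T(x))$, a strict improvement over the trivial bound $2\,d(x,T(x))$, and then forms the \emph{asymmetric} convex combination $z=\alpha\,T^{n+1}(x)\oplus\beta\,T^{n+2}(x)$ with $\beta$ chosen as in Lemma \ref{lemma4}. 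Writing the contractive inequality at the pairs $(T^n(x),z)$ and $(T^{n+1}(x),z)$ and combining via $d(z,T(z))\le\alpha\,d(T^{n+1}(x),T(z))+\beta\,d(T^{n+2}(x),T(z))$ yields $d(z,T(z))\le K\,d(x,T(x))$ with an explicit $K<1$; iterating this construction produces exactly the approximate fixed point sequence your outline requires, and your $(\ast)$ then finishes the proof. It is the gain of the factor $\frac{2}{2-a}<2$ in the two-step estimate, fed into a non-midpoint convex combination, that lets the strict inequality $a<1$ survive the cancellation you observed; without some such quantitative input your plan cannot be completed as described.
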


\'{C}iri\'{c} \cite{ciric} obtained the following generalization of Gregus' theorem.

\begin{theorem} Let $(X,d)$ be a complete convex metric space and $C$ be a nonempty closed and convex subset of $X$. Let $T: C\rightarrow C$ be a mapping satisfying
\begin{equation}\label{CG}\tag{CG} \begin{array}{lll}
d(T(x),T(y))&\le& a\ \max\Big\{d(x,y),c\ \Big[d(x,T(y))+d(y,T(x))\Big]\Big\}\\
&&\\
&&\;\;\;\;\;\;\;\;\;\;+ b \ \max\{d(x,T(x)),d(y,T(y))\},
\end{array}\end{equation}
for all $x,y\in C$, where $0<a<1$, $a+b=1$ and $0\le c\le \frac{4-a}{8-a}$. Then $T$ has a unique fixed point.
\end{theorem}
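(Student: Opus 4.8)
The plan is to prove uniqueness by a direct substitution and to reduce the existence of a fixed point to showing that the displacement $x\mapsto d(x,T(x))$ has infimum zero on $C$; the convex structure of $X$ and the sharp bound $0\le c\le\frac{4-a}{8-a}$ enter only at this last step.

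\emph{Uniqueness.} If $T(x)=x$ and $T(y)=y$, then \eqref{CG} gives $d(x,y)=d(T(x),T(y))\le a\max\{d(x,y),\,c[d(x,y)+d(y,x)]\}+b\cdot 0=a\max\{d(x,y),\,2c\,d(x,y)\}$. Since $c\le\frac{4-a}{8-a}$ forces $2c\le 1$, the maximum equals $d(x,y)$, so $d(x,y)\le a\,d(x,y)$ with $a<1$; hence $x=y$.

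\emph{Reduction for existence.} Put $m:=\inf_{x\in C}d(x,T(x))$ and suppose first that $m=0$. Choose $x_n\in C$ with $\varepsilon_n:=d(x_n,T(x_n))\to 0$. Applying \eqref{CG} to the pair $x_n,x_m$, estimating $d(x_n,T(x_m))+d(x_m,T(x_n))\le 2\,d(x_n,x_m)+\varepsilon_n+\varepsilon_m$ and using $2c\le 1$ to collapse the inner maximum, one obtains
\[
(1-a)\,d(x_n,x_m)\ \le\ (1+ac)(\varepsilon_n+\varepsilon_m)+b\max\{\varepsilon_n,\varepsilon_m\}.
\]
Hence $(x_n)$ is Cauchy and, $C$ being closed, converges to some $z\in C$. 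Letting $n\to\infty$ in \eqref{CG} applied to $x_n$ and $z$, and noting $d(x_n,z)\to0$, $\varepsilon_n\to0$, and $d(z,T(x_n))\le d(z,x_n)+\varepsilon_n\to0$, one gets $d(z,T(z))\le(ac+b)\,d(z,T(z))$; since $ac+b=1-a(1-c)<1$, this forces $T(z)=z$.

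\emph{The crux: $m=0$.} This is where I expect the real work, and where convexity and the precise value of $c$ are indispensable. Assuming $m>0$, I would start from a point $x$ with $d(x,T(x))$ within $\varepsilon$ of $m$ and exploit the convexity mapping $W$ of $X$ — which keeps convex combinations inside the closed convex set $C$ — to form auxiliary points: convex combinations of $x$, $T(x)$ and their further iterates, for instance a Krasnoselskii-type sequence $x_{k+1}=W(x_k,T(x_k),\tfrac12)$ (pure Picard iterates need not help, as the orbit diameter need not shrink). Controlling the displacement of such points by combining \eqref{CG} with the structural inequality $d(u,W(p,q,\lambda))\le\lambda\,d(u,p)+(1-\lambda)\,d(u,q)$, one must show that the displacements cannot stay bounded away from $m$, producing a point with $d(\cdot,T(\cdot))<m$ and contradicting the definition of $m$. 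The estimate bookkeeping is delicate, and the inequality chain closes exactly under $c\le\frac{4-a}{8-a}$ (equivalently $\frac{c}{1-c}\le\frac{4-a}{4}$); compared with this, the uniqueness argument and the reduction above are routine.
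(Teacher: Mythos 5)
The statement you are proving is quoted in the paper as \'{C}iri\'{c}'s theorem from the literature; the paper itself gives no proof of it, so the only fair comparison is with the argument the paper actually carries out for its own main result (Lemmas 3.1--3.4 and Theorem 4.1), which is the monotone-graph analogue of exactly this existence proof. Measured against that, your uniqueness argument and your reduction of existence to $\inf_{x\in C} d(x,T(x))=0$ are correct: the computation $(1-a)\,d(x_n,x_m)\le(1+ac)(\varepsilon_n+\varepsilon_m)+b\max\{\varepsilon_n,\varepsilon_m\}$ checks out (using $2c\le 1$, which does follow from $c\le\frac{4-a}{8-a}$), and the passage to the limit via $ac+b<1$ is sound.

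However, there is a genuine gap: the entire difficulty of the theorem is concentrated in the step you label ``the crux'' and then do not prove. Announcing that one should form convex combinations of iterates and that ``the inequality chain closes exactly under $c\le\frac{4-a}{8-a}$'' is a plan, not an argument; no inequality is actually derived showing that the displacement infimum is $0$, and the specific Krasnoselskii iterate $x_{k+1}=W(x_k,T(x_k),\tfrac12)$ you propose is not obviously the right auxiliary point. For comparison, the paper's version of this step is quite specific: one first shows (Lemma 3.3) that some iterate satisfies $d(T^n(x),T^{n+2}(x))\le\frac{2}{2-a}\,d(x,T(x))$, then forms $z=\alpha\,T^{n+1}(x)\oplus\beta\,T^{n+2}(x)$ for a carefully chosen $\beta$ with $2c<\beta<1$, and proves via an explicit constant $K<1$ (Lemma 3.4) that $d(z,T(z))\le K\,d(x,T(x))$, which is what makes the displacements decrease geometrically rather than merely approach an infimum. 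Until you supply the analogous estimates --- and verify that they close under the weaker hypothesis $c\le\frac{4-a}{8-a}$ rather than $c<\frac12$ --- the existence half of the theorem is not established.
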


\medskip
\begin{remark}  If we assume that $a+b < 1$ and $c \le \frac{1}{2}$, then any map $T$ which satisfies the condition (\ref{CG}) also satisfies the following condition:
$$d(T(x),T(y)) \leq (a+b)\ \max\Big\{ d(x,y), d(x,T(y)), d(y,T(x)), d(x,T(x)), d(y,T(y))\Big\}.$$
In other words, $T$ is a \'{C}iri\'{c} quasi-contraction mapping.  This concept was introduced by \'{C}iri\'{c} \cite{ciric-74} as an extension to the contraction condition.  In \cite{ciric-74}, he proved an analogue to the Banach Contraction Principle for this type of mappings without the use of convexity.
\end{remark}

Recently, Djafari-Rouhani and Moradi \cite{rouhanim} obtained the following improvement of \'{C}iri\'{c}'s result:
\begin{theorem}\label{rouhanimoradi} Let $(X,d)$ be a complete convex metric space and $T: X\rightarrow X$ be a mapping satisfying
\[\begin{array}{lll}
d(T(x),T(y))&\le& a\ \max\{d(x,y),c[d(x,T(y))+d(y,T(x))]\}\\
&& +b\ \max\{d(x,T(x)),d(y,T(y))\},
\end{array}\]
for all $x,y\in X$, where $0<a<1$, $a+b=1$ and $0\le c < \frac{1}{2}$. Then $T$ has a unique fixed point.
\end{theorem}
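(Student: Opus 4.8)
The plan is to follow the classical route for Gregus--\'{C}iri\'{c} type results: reduce everything to showing that the displacement functional $\varphi(x)=d(x,T(x))$ has infimum $0$, after which existence and uniqueness come out quickly. Set $r=\inf_{x\in X}\varphi(x)$.

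I would start with an easy preliminary. Taking $y=T(x)$ in the defining inequality kills the term $d(T(x),T(x))$, and with $u=d(x,T(x))$, $v=d(T(x),T^2(x))$ one gets $v\le a\max\{u,c\,d(x,T^2(x))\}+b\max\{u,v\}$; if $v>u$ this gives $v\le\max\{u,c(u+v)\}\le 2cv<v$, a contradiction (here $c<\tfrac12$ is used). Hence $d(T(x),T^2(x))\le d(x,T(x))$ for every $x$, so $\varphi$ is non-increasing along Picard orbits, and $T(x)$ is a near-minimizer of $\varphi$ whenever $x$ is.

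The heart of the proof is $r=0$, and I expect this to be the main obstacle. Assuming $r>0$, fix $x$ with $\varphi(x)<r+\varepsilon$ (so also $d(x,T(x))<r+\varepsilon$), and exploit convexity by putting $z=W\big(x,T(x),\lambda\big)$ for a parameter $\lambda\in\big(0,a(1-2c)\big)$ --- an interval nonempty precisely because $c<\tfrac12$. Then $d(x,z)\le(1-\lambda)d(x,T(x))$, $d(z,T(x))\le\lambda\,d(x,T(x))$, and from $d(Tz,z)\le\lambda\,d(Tz,x)+(1-\lambda)\,d(Tz,Tx)$ and $d(Tz,x)\le d(x,T(x))+d(Tz,Tx)$ one gets $\varphi(z)\le\lambda\,d(x,T(x))+d(T(x),Tz)$. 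Bounding $d(T(x),Tz)$ through the defining inequality for the pair $(x,z)$ --- in which every auxiliary distance is controlled by $\lambda$ or $(1-\lambda)$ times $d(x,T(x))<r+\varepsilon$, by $\varphi(z)$, or by $\varphi(z)+\varepsilon$ --- and solving for $\varphi(z)$ while using $b=1-a$, the choice $\lambda<a(1-2c)$ is what forces the relevant branch of the maximum to yield $\varphi(z)<r$ for small $\varepsilon$, contradicting $\varphi(z)\ge r$. The delicate points are to handle both branches of each maximum and, when a single application does not immediately close, to iterate this convex-combination step; the threshold $c<\tfrac12$ should be exactly what makes the estimate work.

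With $r=0$, pick $(x_n)$ with $\varphi(x_n)\to0$. From $d(x_m,x_n)\le\varphi(x_m)+\varphi(x_n)+d(Tx_m,Tx_n)$ and the defining inequality (using $d(x_m,Tx_n)\le d(x_m,x_n)+\varphi(x_n)$ and symmetrically), if $L=\limsup_{m,n}d(x_m,x_n)>0$ then along a suitable subsequence $c\big(2d(x_m,x_n)+\varphi(x_m)+\varphi(x_n)\big)<d(x_m,x_n)$ (because $2c<1$), so $(1-a)d(x_m,x_n)\le\varphi(x_m)+\varphi(x_n)+b\max\{\varphi(x_m),\varphi(x_n)\}\to0$, and $1-a>0$ forces $L=0$; thus $(x_n)$ is Cauchy and $x_n\to z$ by completeness. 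Then $d(z,T(z))\le d(z,x_n)+\varphi(x_n)+d(Tx_n,Tz)$ together with the defining inequality for $(x_n,z)$ gives, as $n\to\infty$, $d(z,T(z))\le(ac+b)\,d(z,T(z))$; since $ac+b=1-a(1-c)<1$ we conclude $T(z)=z$. For uniqueness, if $T(z)=z$ and $T(w)=w$ then $d(z,w)=d(Tz,Tw)\le a\max\{d(z,w),2c\,d(z,w)\}=a\,d(z,w)$ by $2c<1$, and $a<1$ gives $z=w$.
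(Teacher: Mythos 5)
Your overall architecture is the right one and, in outline, coincides with the paper's: the estimate $d(x,y)\le\frac{2-a}{1-a}\bigl(d(x,Tx)+d(y,Ty)\bigr)$ (Lemma \ref{lemma1}) drives both the Cauchy argument and uniqueness, the monotonicity of $n\mapsto d(T^nx,T^{n+1}x)$ is Lemma \ref{lemma2}, and your endgame (the limit of a sequence with vanishing displacement is a fixed point because $ac+b<1$) is exactly how the proof of Theorem \ref{main} concludes. The problem lies in the central step, where you try to produce a point of strictly smaller displacement.

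The gap: the auxiliary point $z=W\bigl(x,T(x),\lambda\bigr)$ cannot yield $\varphi(z)<\varphi(x)$. By your own reduction, $\varphi(z)\le\lambda u+d(Tx,Tz)$ with $u=d(x,Tx)$, and the contractive inequality for the pair $(x,z)$ gives, in the branch where the first maximum equals $d(x,z)\le(1-\lambda)u$ and the second equals $u$ (the generic branch when $c$ is small and $\varphi(z)\le u$),
$$\varphi(z)\ \le\ \lambda u+a(1-\lambda)u+bu\ =\ \bigl[1+\lambda(1-a)\bigr]\,u\ >\ u.$$
Your choice $\lambda<a(1-2c)$ only helps in the branch where the first maximum is the $c[\cdots]$ term; since the maximum selects the worse alternative, no contradiction with $\varphi(z)\ge r$ is obtained, and iterating a step whose multiplicative factor is $\ge 1$ does not close the argument. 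The structural reason is that $x$ is not itself an image under $T$, so $d(x,Tz)$ cannot be fed back through the contractive inequality. The paper (following Djafari-Rouhani--Moradi, and ultimately Gregus) instead takes $z=\alpha\,T^{n+1}(x)\oplus\beta\,T^{n+2}(x)$ with $\alpha=1-\beta$ and $2c<\beta<1$: then $d(z,Tz)\le\alpha\,d(T^{n+1}x,Tz)+\beta\,d(T^{n+2}x,Tz)$ and \emph{both} reference points are images, so the contractive inequality applies to the pairs $(T^nx,z)$ and $(T^{n+1}x,z)$. Two further ingredients are indispensable and absent from your sketch: the a priori bound $d(T^nx,T^{n+2}x)\le\frac{2}{2-a}\,d(x,Tx)$ for a suitably chosen $n$ (Lemma \ref{lemma3}, itself a two-case argument), and the verification that the resulting constant $K=\alpha a\max\bigl\{\alpha+\frac{2\beta}{2-a},\,c\bigl(2+\frac{2\beta}{2-a}\bigr)\bigr\}+\beta^2a+b$ is strictly less than $1$ (Lemma \ref{lemma4}, which is precisely where $c<\frac12$ enters via $2c<\beta$). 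Without these, the key inequality $d(z,Tz)\le K\,d(x,Tx)$ with $K<1$ is not available, and the rest of your (otherwise correct) argument has nothing to run on.
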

\medskip

\noindent In fact, the authors in \cite{rouhanim} gave a simple example which shows that the conclusion of Theorem \ref{rouhanimoradi} does not hold if $c > \frac{1}{2}$ and asked whether its conclusion holds when $c = \frac{1}{2}$.  This problem is still open.\\
\medskip

In this work, we generalize Theorem \ref{rouhanimoradi} to the case of monotone self-mappings defined on a weighted graph.

\section{Preliminaries}

\noindent A graph $G$ is a nonempty set $V(G)$ of elements called vertices together with a possibly empty subset $E(G)$ of $V(G)\times V(G)$ called edges. A directed graph (digraph) is a graph with a direction assigned to each of its edges. In this paper, we assume that all digraphs are reflexive, i.e., $(x, x) \in E(G)$ for each $x\in V(G) $. Moreover, we assume that there exists a distance function $d$ defined on the set of vertices $V(G)$. Throughout this work, we treat $G$ as a weighted digraph by giving each edge the metric distance between its vertices.

Let $x$ and $y$ be in $V(G)$.  A (directed) path from $x$ to $y$ is a finite sequence $\{x_{i}\}_{i=0}^{N}$ of vertices such that $x_{0} = x$, $x_{N} = y$ and $(x_{i-1},x_{i})\in E(G)$ for $i = 1,...,N$.

\begin{definition}
The digraph $G$ is said to be transitive if $(x,z) \in E(G)$ whenever  $(x,y)\in E(G)$ and $(y,z)\in E(G)$, for any $x,y,z\in V(G)$. In another words, $G$ is transitive if for any two vertices $x$ and $y$ that are connected by a directed finite path, we have $(x,y) \in E(G)$.
\end{definition}

\begin{definition} \label{monotone-GC-mapping}  Let $G$ be a weighted digraph and $d$ be a metric distance on $V(G)$. Let $C$ be a nonempty subset of $V(G)$.  A mapping $T: C \rightarrow C$ is called
\begin{enumerate}
\item[(1)]  $G$-monotone if $T$ is edge preserving, i.e., $(T(x),T(y))\in E(G)$ whenever $(x,y)\in E(G)$, for any $x, y \in C$.
\item[(2)]  $G$-monotone Gregus-\'{C}iri\'{c}-mapping if $T$ is $G$-monotone and there exist $a, b, c \in [0,+\infty)$ such that
\[\begin{array}{lll}
d(T(x),T(y))&\le& a\ \max \Big\{d(x,y),c\Big[d(x,T(y))+d(y,T(x))\Big]\Big\}\\
&&+b\ \max \Big\{d(x,T(x)),d(y,T(y))\Big\},
\end{array}\]
for any $x,y\in C$ with $(x,y)\in E(G)$.
\item[(3)] $G$-monotone Gregus-\'{C}iri\'{c}-contraction if $T$ is $G$-monotone Gregus-\'{C}iri\'{c}-mapping for which $0<a<1$, $a+b = 1$ and $c \leq \frac{1}{2}$.
\end{enumerate}
The point $x \in C$ is called a fixed point of $T$ if $T(x) = x$.
\end{definition}

\medskip
Note that in the example given by the authors in \cite{rouhanim}, the mapping $T(x) = x+1$ is monotone for the order and may be seen as an example of a monotone Gregus-\'{C}iri\'{c} mapping.  Moreover, any monotone contraction is a monotone Gregus-\'{C}iri\'{c}-contraction. The example studied by Ran and Reurings \cite{RR} gives an example of a monotone-contraction which fails to be a contraction.

\medskip\medskip
The following definition is needed since we will be using the concept of increasing or decreasing sequences in the sense of a digraph.

\begin{definition}  Let $G$ be a digraph.  A sequence $\{x_n\} \in V(G)$ is said to be
\begin{itemize}
\item[(a)] $G$-increasing if $(x_n,x_{n+1}) \in E(G)$, for all $n \in \mathbb{N}$;
\item[(b)] $G$-decreasing if $(x_{n+1},x_{n}) \in E(G)$, for all $n \in \mathbb{N}$;
\item[(c)] $G$-monotone if $\{x_n\}$ is either $G$-increasing or $G$-decreasing.
\end{itemize}
\end{definition}

\begin{definition}\label{G-complete}  Let $G$ be a weighted digraph and $d$ be a metric distance on $V(G)$.  A subset $C$ of $V(G)$ is said to be $G$-complete if any $G$-monotone sequence $\{x_n\}$ in $C$ which is Cauchy is convergent to a point in $C$.
\end{definition}

\medskip
\begin{remark}\label{G-complete}  Let $G$ be a weighted digraph and $d$ be a metric distance on $V(G)$.  If $(V(G), d)$ is complete, then $V(G)$ is $G$-complete.  The converse is not true.  Indeed, let $X = \{(x,y) \in \mathbb{R}^2;\ 0 \leq x < 1\; and\;\; 0 \le y \le 1\}$ endowed with the Euclidean distance.  Then $(X,d)$ is not complete.  Moreover, if we consider the weighted digraph $G$ such that $V(G) = X$ and
$$\Big((x,y), (a,b)\Big) \in V(G) \;\; \mbox{if and only if}\;\; x = a \; and\;\; y \le b;$$
then $V(G)$ is $G$-complete.  In fact, any $G$-monotone sequence is convergent.
\end{remark}

\medskip
\noindent As Jachymski did in \cite{Jachymski}, we introduce the following property:

\begin{definition}\label{Jachymski-*}
Let $G$ be a weighted digraph and $C$ be a nonempty subset of $V(G)$. We say that $C$ has Property (*) if for any $G$-increasing (resp. $G$-decreasing) sequence $\{x_{n}\}$ in $C$ which converges to  $x$,  there is a subsequence $\{x_{k_{n}}\}$ with  $(x_{k_{n}}, x)\in E(G)$ (resp. $(x, x_{k_{n}})\in E(G)$), for $n \in \mathbb{N}$.
\end{definition}
\medskip

\noindent Note that if $G$ is transitive, then Property (*) implies that for any $G$-increasing sequence $\{x_n\}$ (resp. $G$-decreasing) which converges to $x$, we have $(x_{n}, x)\in E(G)$ (resp. $(x, x_n)\in E(G)$), for every $n \in \mathbb{N}$.

\section{Some basic results}
Throughout this section, we consider $G$ a weighted digraph with $d$ a metric distance on $V(G)$. Let $C$ be a nonempty subset of $V(G)$ and $T: C \rightarrow C$ be $G$-monotone Gregus-\'{C}iri\'{c}-contraction mapping.  Then there exist positive numbers $a, b, c $ such that $0 < a <1$, $a+b = 1$ and $c \le \frac{1}{2}$ such that
$$\begin{array}{lll}
d(T(x),T(y))&\le& a\ \max \Big\{d(x,y),c\Big[d(x,T(y))+d(y,T(x))\Big]\Big\}\\
&&\;\;\;\;\; +\ b\ \max \Big\{d(x,T(x)),d(y,T(y))\Big\},
\end{array}$$
for any $x,y\in C$ with $(x,y)\in E(G)$.\\

The following technical results will be crucial to the establishment of the main theorem of this work.

\begin{lemma}\label{lemma1}  Under the above assumptions, we have
$$d(x,y) \leq \frac{2-a}{1-a} \ \Big(d(x,T(x)) + d(y, T(y))\Big),$$
for any $x,y\in C$ with $(x,y)\in E(G)$  or $(y,x)\in E(G)$.
\end{lemma}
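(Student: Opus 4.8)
The plan is to play the triangle inequality against the Gregus-\'{C}iri\'{c}-contraction inequality, splitting the argument into two cases according to which of the two entries realises the inner maximum $\max\{d(x,y),\,c[d(x,T(y))+d(y,T(x))]\}$.

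First, reduce to the case $(x,y)\in E(G)$: if instead $(y,x)\in E(G)$, applying the contraction inequality to the ordered pair $(y,x)$ and using that $d$ and both maxima are symmetric yields the identical estimate for $d(T(x),T(y))$, while the inequality to be proved is already symmetric in $x$ and $y$. So assume $(x,y)\in E(G)$ and abbreviate $D=d(x,y)$, $A=d(x,T(x))$, $B=d(y,T(y))$, $t=d(T(x),T(y))$. I would then record the elementary triangle bounds $D\le A+B+t$, together with $d(x,T(y))\le\min\{A+t,\,D+B\}$ and $d(y,T(x))\le\min\{B+t,\,D+A\}$; the contraction hypothesis reads $t\le a\,\max\{D,\,c[d(x,T(y))+d(y,T(x))]\}+b\,\max\{A,B\}$, and throughout I use $a+b=1$, $0<a<1$, $c\le\tfrac12$ and $\max\{A,B\}\le A+B$.

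In the case $d(x,y)\ge c[d(x,T(y))+d(y,T(x))]$ the contraction inequality gives $t\le aD+b(A+B)$, and inserting this into $D\le A+B+t$ yields $(1-a)D\le(1+b)(A+B)=(2-a)(A+B)$, which is the claim. In the opposite case $d(x,y)<c[d(x,T(y))+d(y,T(x))]$ two improvements come into play. Using $c\le\tfrac12$ with $d(x,T(y))\le A+t$ and $d(y,T(x))\le B+t$ gives $2D<d(x,T(y))+d(y,T(x))\le A+B+2t$, i.e.\ the \emph{sharpened} triangle inequality $D<\tfrac12(A+B)+t$. At the same time the inner maximum equals $c[d(x,T(y))+d(y,T(x))]\le c(2D+A+B)\le D+\tfrac12(A+B)$, so the contraction inequality becomes $t\le a\bigl(D+\tfrac12(A+B)\bigr)+b(A+B)=aD+\tfrac{2-a}{2}(A+B)$. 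Feeding this into the sharpened triangle inequality gives $(1-a)D<\tfrac{3-a}{2}(A+B)\le(2-a)(A+B)$, the last step using $a<1$. In both cases $D\le\tfrac{2-a}{1-a}(A+B)$, as desired.

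The one genuine obstacle is pinning down the constant. The naive route — insert the generic bound $t\le aD+\tfrac{2-a}{2}(A+B)$ straight into $D\le A+B+t$ — only produces the weaker constant $\tfrac{4-a}{2(1-a)}$. The point is that in the ``bad'' case the very hypothesis $d(x,y)<c[d(x,T(y))+d(y,T(x))]$ with $c\le\tfrac12$ forces $2D<A+B+2t$, a strictly better relation between $D$ and $t$ than $D\le A+B+t$, and this extra gain is exactly what brings the constant down to the asserted $\tfrac{2-a}{1-a}$.
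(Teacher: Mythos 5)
Your proof is correct, but it is organized differently from the paper's. The paper avoids any case analysis: it bounds \emph{both} arguments of the inner maximum by the single quantity $d(x,T(x))+d(T(x),T(y))+d(y,T(y))$ --- the first via the triangle inequality, the second from $c\le\tfrac12$ via $c[d(x,T(y))+d(y,T(x))]\le c[d(x,T(x))+2d(T(x),T(y))+d(y,T(y))]$ --- then solves the resulting linear inequality for $d(T(x),T(y))$ to obtain the intermediate estimate $d(T(x),T(y))\le\tfrac{1}{1-a}\bigl(d(x,T(x))+d(y,T(y))\bigr)$, and finishes with one more application of the triangle inequality. You instead split according to which entry realises the maximum, and in the ``bad'' case you extract the extra relation $2D<A+B+2t$ from the case hypothesis itself; your observation that the naive substitution only yields $\tfrac{4-a}{2(1-a)}$ and that this sharpened inequality is what restores the constant $\tfrac{2-a}{1-a}$ is exactly right, and all the algebra in both cases checks out (in Case 2, $(1-a)D<\tfrac{3-a}{2}(A+B)\le(2-a)(A+B)$ since $a<1$). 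The paper's route is slightly shorter and yields the reusable bound on $d(T(x),T(y))$ as a by-product; yours makes more transparent where $c\le\tfrac12$ and $a+b=1$ each enter and why the constant cannot be improved by the generic argument. Both are valid proofs of the stated inequality.
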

\begin{proof} Without loss of generality, we assume $(x,y)\in E(G)$.  Then we have
\[\begin{array}{lll}
d(T(x),T(y))&\le& a\ \max \Big\{d(x,y),c\Big[d(x,T(y))+d(y,T(x))\Big]\Big\}\\
&&\;\;\; +b\ \max \Big\{d(x,T(x)),d(y,T(y))\Big\}.
\end{array}\]
Since $c \leq \frac{1}{2}$, we get
\[\begin{array}{lll}
c\Big[d(x,T(y))+d(y,T(x))\Big] &\leq& c\Big[d(x,T(x)) + 2 d(T(x), T(y))+ d(y,T(y))\Big]\\
&\le & d(x,T(x)) + d(T(x), T(y))+ d(y,T(y),
\end{array}\]
which implies
\[\begin{array}{lll}
d(T(x),T(y))&\le& a\ \max \Big\{d(x,T(x)) + d(T(x), T(y)) + d(y, T(y)),\\
&&\;\;\; c\Big[d(x,T(y))+d(y,T(x))\Big]\Big\} +b\ \max \Big\{d(x,T(x)),d(y,T(y))\Big\}\\
&\le & a\ \Big\{d(x,T(x)) + d(T(x), T(y)) + d(y, T(y))\Big\} \\
&& \;\;\;  + b\ \Big\{d(x,T(x))+ d(y,T(y))\Big\}\\q

&\le & (a+b) \Big(d(x,T(x)) + d(T(x), T(y))\Big) + a \ d(T(x), T(y)).
\end{array}\]
Since $a+b = 1$, we get
$$d(T(x),T(y))\le \frac{1}{1-a}\ \Big(d(x,T(x)) + d(T(x), T(y))\Big).$$
Hence
\[\begin{array}{lll}
d(x,y)&\le& d(x,T(x)) + d(T(x), T(y)) + d(y, T(y),\\
&\le &\left(1+\frac{1}{1-a}\right)\ \Big(d(x,T(x)) + d(T(x), T(y))\Big),
\end{array}\]
which implies
$$d(x,y) \leq \frac{2-a}{1-a} \ \Big(d(x,T(x)) + d(y, T(y))\Big).$$
\end{proof}

\medskip

\begin{lemma}\label{lemma2}  Under the above assumptions, if $x \in C$ is such that $(x, T(x)) \in E(G)$ or $(T(x),x) \in E(G)$, then the sequence $\Big \{ d(T^n(x), T^{n+1}(x))\Big \}_{n \in \mathbb{N}}$ is decreasing.
\end{lemma}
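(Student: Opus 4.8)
The plan is to write $x_n := T^n(x)$ and $D_n := d(x_n,x_{n+1})$, and to prove the sharper statement $D_{n+1}\le D_n$ for every $n\in\mathbb{N}$. Since $d$ is symmetric, I would first reduce to the case $(x,T(x))\in E(G)$ without loss of generality. Because $T$ is $G$-monotone (edge preserving), a straightforward induction then gives $(x_n,x_{n+1})=(T^n(x),T^{n+1}(x))\in E(G)$ for every $n$, so that the defining Gregus-\'Ciri\'c inequality may be applied to each consecutive pair $(x_n,x_{n+1})$.

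Next I would apply that inequality with $(x,y)=(x_n,x_{n+1})$. Using $d(x_{n+1},x_{n+1})=0$, it becomes
\[ D_{n+1}\ \le\ a\,\max\Big\{D_n,\ c\,d(x_n,x_{n+2})\Big\}\ +\ b\,\max\{D_n,D_{n+1}\}. \]
The only quantity on the right that is not already $D_n$ or $D_{n+1}$ is $c\,d(x_n,x_{n+2})$, and here the triangle inequality together with $c\le\tfrac12$ yields $c\,d(x_n,x_{n+2})\le\tfrac12\big(D_n+D_{n+1}\big)$. This is exactly the device used in the proof of Lemma \ref{lemma1}, and it is the only step in the argument that requires any thought; I expect the handling of this cross-term $c[d(x_n,T(x_{n+1}))+d(x_{n+1},T(x_n))]$ to be the main (mild) obstacle, everything else being bookkeeping.

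Finally I would argue by contradiction. Suppose $D_{n+1}>D_n$ for some $n$; we may also assume $D_{n+1}>0$, since otherwise there is nothing to prove. Then $\max\{D_n,D_{n+1}\}=D_{n+1}$, while $D_n<D_{n+1}$ and $\tfrac12(D_n+D_{n+1})<D_{n+1}$, so $\max\{D_n,\ c\,d(x_n,x_{n+2})\}<D_{n+1}$. Since $a>0$, substituting into the displayed inequality gives $D_{n+1}<aD_{n+1}+bD_{n+1}=(a+b)D_{n+1}=D_{n+1}$, a contradiction. Hence $D_{n+1}\le D_n$ for all $n$, which is precisely the assertion that $\big\{d(T^n(x),T^{n+1}(x))\big\}_{n\in\mathbb{N}}$ is decreasing.
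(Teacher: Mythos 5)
Your proposal is correct and follows essentially the same route as the paper: apply the Gregus--\'Ciri\'c inequality to the consecutive pair $(T^n(x),T^{n+1}(x))$, bound the cross-term $c\,d(T^n(x),T^{n+2}(x))$ by $\tfrac12\big(d(T^n(x),T^{n+1}(x))+d(T^{n+1}(x),T^{n+2}(x))\big)$ via the triangle inequality and $c\le\tfrac12$, and derive the contradiction $D_{n+1}<(a+b)D_{n+1}=D_{n+1}$ from the assumption $D_{n+1}>D_n$. The only differences are notational (your indices are shifted by one relative to the paper's), so there is nothing further to add.
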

\begin{proof} Without loss of generality, we assume $(x,T(x))\in E(G)$.  Since $T$ is $G$-monotone, we get $(T^n(x), T^{n+1}(x)) \in E(G)$, for any $n \in \mathbb{N}$.  Fix $n \ge 1$.  Then
\[\begin{array}{lll}
d(T^n(x),T^{n+1}(x))&\le& a\ \max \Big\{d(T^{n-1}(x),T^n(x)),c\ d(T^{n-1}(x),T^{n+1}(x))\Big\}\\
&&\;\;\; +\ b\ \max \Big\{d(T^{n-1}(x),T^n(x)),d(T^n(x),T^{n+1}(x))\Big\}.\\
\end{array}\]
Assume that $d(T^{n-1}(x),T^n(x)) < d(T^n(x),T^{n+1}(x))$ holds.  Since
$$\begin{array}{lll}
d(T^{n-1}(x),T^{n+1}(x)) &\leq& d(T^{n-1}(x),T^{n}(x)) + d(T^{n}(x),T^{n+1}(x))\\
& < & 2\ d(T^{n}(x),T^{n+1}(x)),
\end{array}$$
and $c \le \frac{1}{2}$, we get
$$\begin{array}{lll}
d(T^{n}(x),T^{n+1}(x)) & < & a\ d(T^{n}(x),T^{n+1}(x)) + b\ d(T^{n}(x),T^{n+1}(x))\\
& = & d(T^{n}(x),T^{n+1}(x)).
\end{array}$$
This contradiction forces $d(T^n(x),T^{n+1}(x)) \leq d(T^{n-1}(x),T^n(x))$.  Since $n$ was taken arbitrarily, we conclude that $\Big \{d(T^n(x), T^{n+1}(x))\Big \}_{n\in \mathbb{N}}$ is decreasing.
\end{proof}

\begin{lemma}\label{lemma3}  Under the above assumptions, assuming $G$ is transitive, if $x \in C$ such that $(x, T(x)) \in E(G)$ or $(T(x),x) \in E(G)$, then there exists $n \geq 1$ such that
$$d(T^n(x),T^{n+2}(x)) \leq \frac{2}{2-a}\ d(x,T(x)).$$
\end{lemma}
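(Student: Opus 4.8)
The plan is to follow the orbit of $x$ and exploit transitivity to connect iterates that are several steps apart. Write $x_n = T^n(x)$, $d_n = d(x_n,x_{n+1})$ and $u_n = d(x_n,x_{n+2})$; without loss of generality assume $(x,T(x))\in E(G)$, so that $G$-monotonicity of $T$ gives $(x_n,x_{n+1})\in E(G)$ for every $n$, and transitivity of $G$ then gives $(x_n,x_m)\in E(G)$ whenever $n\le m$, in particular $(x_{n-1},x_{n+1})\in E(G)$. By Lemma \ref{lemma2} the sequence $\{d_n\}$ is decreasing, so $d_n\le d_0$ for all $n$, and by the triangle inequality $u_0\le d_0+d_1\le 2\,d_0$.

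First I would apply the Gregus-\'{C}iri\'{c} inequality to the pair $(x_{n-1},x_{n+1})$, which bounds $u_n=d(T(x_{n-1}),T(x_{n+1}))$ by
$$a\,\max\Big\{u_{n-1},\,c\big[d(x_{n-1},x_{n+2})+d_n\big]\Big\}+b\,\max\{d_{n-1},d_{n+1}\},$$
and since $\{d_n\}$ is decreasing the last maximum equals $d_{n-1}$. Now split according to which term realizes the first maximum. If it is the term $c\big[d(x_{n-1},x_{n+2})+d_n\big]$, substitute the triangle estimate $d(x_{n-1},x_{n+2})\le d_{n-1}+u_n$, use $c\le\tfrac12$ and $a+b=1$, and solve for $u_n$ (note that $u_n$ occurs on both sides); this gives $u_n\le\tfrac{a}{2-a}\,d_n+d_{n-1}\le\tfrac{2}{2-a}\,d_0$, so the lemma holds with this $n$. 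If instead the first maximum is $u_{n-1}$, one only gets the recursion $u_n\le a\,u_{n-1}+b\,d_{n-1}\le a\,u_{n-1}+b\,d_0$.

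It then remains to run this dichotomy for $n=1$ and, if necessary, $n=2$. If the ``$c$-term'' case occurs at $n=1$ we are finished; otherwise $u_1\le a\,u_0+b\,d_0$. If the ``$c$-term'' case occurs at $n=2$ we are finished; otherwise $u_2\le a\,u_1+b\,d_1\le a\,u_1+b\,d_0\le a^2u_0+b(1+a)d_0$, and inserting $u_0\le 2d_0$ together with $b=1-a$ yields $u_2\le(1+a^2)\,d_0$. The elementary inequality $(1+a^2)(2-a)\le 2$, which reduces to $a(1-a)^2\ge 0$ and hence holds for $0<a<1$, then gives $u_2\le\tfrac{2}{2-a}\,d_0$. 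So some $n\in\{1,2\}$ works.

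The only genuine obstacle is recognizing that a single application of the contraction inequality does not suffice: in the worst case the first branch at $n=1$ only yields $u_1\le(1+a)d_0$, and $(1+a)d_0>\tfrac{2}{2-a}d_0$ since $(1+a)(2-a)=2+a(1-a)>2$. One must therefore chain two applications, and keep the sharper constant $d_{n-1}$ (not $d_0$) in the ``$c$-term'' branch so that the bound $\tfrac{2}{2-a}d_0$ comes out precisely; the rearrangement in that branch, where $u_n$ appears on both sides, is the one computation that needs care.
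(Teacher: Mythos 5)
Your proof is correct, and its skeleton coincides with the paper's: apply the contraction inequality to the pair $(T^{n-1}(x),T^{n+1}(x))$ (legitimate by transitivity), use Lemma \ref{lemma2} to replace $\max\{d_{n-1},d_{n+1}\}$ by $d_{n-1}$, and split according to which term realizes the first maximum; in the $c$-branch your rearranged bound $u_n\le \frac{a}{2-a}\,d_n+d_{n-1}$ is exactly the $c=\frac{1}{2}$ evaluation of the paper's bound $\frac{2ac+b}{1-ac}\,d(x,T(x))$, and the division is safe since $1-\frac{a}{2}>0$. Where you genuinely depart is in the other branch. The paper assumes the $c$-term is dominated for \emph{every} $n\ge 1$, iterates the recursion to $u_n\le a^{n-1}d(x,T^2(x))+\frac{b}{1-a}\,d_0\le(2a^{n-1}+1)\,d_0$, and then picks $n$ large enough that $2a^{n-1}\le\frac{a}{2-a}$ --- so the paper's $n$ depends on $a$ and grows without bound as $a\to 1$. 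You instead chain the recursion only twice, feed in $u_0\le 2d_0$ and $b=1-a$ to get $u_2\le(1+a^2)\,d_0$, and close with the identity $(1+a^2)(2-a)\le 2\iff a(1-a)^2\ge 0$; this gives the strictly stronger, effective conclusion that some $n\in\{1,2\}$ always works. Your observation that a single step cannot suffice, because $(1+a)(2-a)=2+a(1-a)>2$, correctly identifies why the two-step chaining is unavoidable. Both arguments are complete and cover all cases (the $c$-branch at $n=1$, the $c$-branch at $n=2$, and the doubly-recursive case); yours buys a uniform bound on $n$, which would matter if the lemma were ever used quantitatively downstream.
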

\begin{proof} Here we mimic an argument used by Djafari-Rouhani and Moradi in their proof of [\cite{rouhanim}, Theorem 2.2]. Without loss of generality, we assume that $(x, T(x)) \in E(G)$.  Since $G$ is transitive and $T$ is $G$-monotone, then $(T^n(x), T^{n+h}(x)) \in E(G)$, for any $n, h \in \mathbb{N}$.  Fix $n \ge 1$.  then we have
\[\begin{array}{lll}
d(T^n(x),T^{n+2}(x))&\le& a\ \max \Big\{d(T^{n-1}(x),T^{n+1}(x)),\\
&&\;\;\;   c\ \Big[d(T^{n-1}(x),T^{n+2}(x))+\ d(T^{n}(x),T^{n+1}(x))\Big]\Big\} \\
&&\;\;\;+\ b\ \max \Big\{d(T^{n-1}(x),T^n(x)),d(T^{n+1}(x),T^{n+2}(x))\Big\}.\\
\end{array}\]
Assume that for some $n \geq 1$, we have
$$d(T^{n-1}(x),T^{n+1}(x)) \le c\ \Big[d(T^{n-1}(x),T^{n+2}(x))+\ d(T^{n}(x),T^{n+1}(x))\Big].$$
Since $\Big\{d(T^n(x), T^{n+1}(x))\Big\}_{n\in \mathbb{N}}$ is decreasing, we get
\[\begin{array}{lll}
d(T^n(x),T^{n+2}(x))&\le& ac\ \Big[d(T^{n-1}(x),T^{n+2}(x))+\ d(T^{n}(x),T^{n+1}(x))\Big] \\
&&\;\;\;+\ b\ d(x,T(x)).\\
\end{array}\]
Since $d(T^{n-1}(x),T^{n+2}(x)) \le d(T^{n-1}(x),T^{n}(x)) + d(T^{n}(x),T^{n+2}(x))$, we get
\[\begin{array}{lll}
d(T^n(x),T^{n+2}(x))&\le& ac\ \Big[2\ d(x,T(x))+\ d(T^{n}(x),T^{n+2}(x))\Big] \\
&&\;\;\;+\ b\ d(x,T(x)),\\
\end{array}\]
which implies
$$d(T^n(x),T^{n+2}(x)) \leq \frac{2ac + b}{1-ac}\ d(x, T(x)).$$
The function $f(c) = \frac{2ac + b}{1-ac}$ is increasing in the interval $[0, \frac{1}{2}]$.  Hence
$$\frac{2ac + b}{1-ac} \leq \frac{a + b}{1-a/2} = \frac{2}{2-a}.$$
Therefore, we have
$$d(T^n(x),T^{n+2}(x)) \leq \frac{2}{2-a}\ d(x, T(x)).$$
Next, assume that for any $n \ge 1$, we have
$$c\ \Big[d(T^{n-1}(x),T^{n+2}(x))+\ d(T^{n}(x),T^{n+1}(x))\Big] \le d(T^{n-1}(x),T^{n+1}(x)).$$
In this case, we have
$$d(T^n(x),T^{n+2}(x))\le a\ d(T^{n-1}(x), T^{n+1}(x)) + b \ d(x,T(x)),$$
which easily implies
$$\begin{array}{lll}
d(T^n(x),T^{n+2}(x)) &\le& a^{n-1}\ d(x, T^2(x)) + \frac{b}{1-a}\ d(x,T(x)\\
&=& a^{n-1}\ d(x, T^2(x)) + d(x,T(x).
\end{array}$$
Since $d(x, T^2(x)) \le d(x, T(x)) + d(T(x), T^2(x)) \le 2\ d(x,T(x))$, we conclude that
$$d(T^n(x),T^{n+2}(x))\le(2\ a^{n-1}\ + 1)\ d(x,T(x),$$
for any $n \geq 1$.  Since $0 < a < 1$, there exists $n \geq 1$ such that
$$2\ a^{n-1}\ + 1 \leq \frac{a}{2-a} + 1 = \frac{2}{2-a},$$
which implies
$$d(T^n(x),T^{n+2}(x)) \leq \frac{2}{2-a}\ d(x, T(x)).$$
\end{proof}

\medskip
Our final basic result of this section is the following:

\begin{lemma}\label{lemma4}  Let $a, b, c $ be positive numbers such that $0 < a <1$, $a+b = 1$ and $c < \frac{1}{2}$.  Then if we choose  $\beta \ge 0$ such that $2c < \beta <1$, we have
$$K = \alpha \ a \ \max\Big\{ \alpha + \frac{2\beta}{2-a}, c\ \Big[2 + \frac{2\beta}{2-a}\Big]\Big\} + \beta^2\ a + b < 1,$$
where $\alpha = 1 - \beta$.
\end{lemma}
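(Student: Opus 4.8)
The plan is to reduce the inequality $K<1$ to a transparent comparison by exploiting the relation $b=1-a$. Writing $\beta^2 a + b = 1 - a(1-\beta^2) = 1 - a\,\alpha\,(1+\beta)$ (recall $\alpha = 1-\beta$), the desired inequality $K<1$ is equivalent to
$$\alpha\, a\, \max\Big\{\alpha + \tfrac{2\beta}{2-a},\ c\big(2 + \tfrac{2\beta}{2-a}\big)\Big\} < a\,\alpha\,(1+\beta).$$
Since $0<a<1$ and $0<\beta<1$ (the upper bound on $\beta$ forces $\alpha = 1-\beta>0$, while $c>0$ forces $\beta>2c>0$), the quantity $a\alpha$ is strictly positive, so we may divide through by it. Hence it suffices to prove
$$\max\Big\{\alpha + \tfrac{2\beta}{2-a},\ c\big(2 + \tfrac{2\beta}{2-a}\big)\Big\} < 1+\beta,$$
that is, to bound each of the two terms appearing in the maximum by $1+\beta$.

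For the first term, substituting $\alpha = 1-\beta$ and cancelling shows that $\alpha + \tfrac{2\beta}{2-a} < 1+\beta$ is equivalent to $\tfrac{2\beta}{2-a} < 2\beta$, which holds because $\beta>0$ and $2-a>1$ (as $a<1$). For the second term, the key observation is that $2-a\in(1,2)$ gives $\tfrac{2}{2-a}<2$, hence $2 + \tfrac{2\beta}{2-a} < 2 + 2\beta = 2(1+\beta)$; multiplying by $c$ and using $2c<1$ (which is part of the hypothesis $2c<\beta<1$) yields $c\big(2 + \tfrac{2\beta}{2-a}\big) < 2c(1+\beta) < 1+\beta$. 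Combining the two estimates, the maximum is strictly less than $1+\beta$, which is exactly what was needed, and reversing the reduction gives $K<1$.

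The computation is elementary; the only point requiring care is the bookkeeping of signs — in particular verifying $\alpha>0$ and $a\alpha>0$ before dividing, and noting that the hypothesis $c>0$ guarantees $\beta>0$ so that the inequality $\tfrac{2\beta}{2-a} < 2\beta$ is strict rather than an equality. No convexity or fixed-point machinery enters here: this lemma is a purely arithmetic preparation whose sole purpose is to manufacture the contraction constant $K$ that will be used in the proof of the main theorem.
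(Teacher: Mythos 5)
Your proof is correct and follows essentially the same route as the paper: both reduce $K<1$ to $\max\bigl\{\alpha+\tfrac{2\beta}{2-a},\,c(2+\tfrac{2\beta}{2-a})\bigr\}<1+\beta$ via $\beta^2 a+b=1-a\alpha(1+\beta)$ and then bound each term of the maximum. The only (inessential) difference is in the second term, where you use $\tfrac{2}{2-a}<2$ together with $2c<1$, while the paper uses $2c<\beta$ together with $\beta^2<2-a$; both estimates are valid.
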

\begin{proof} Note that $K < 1$ if and only if
$$\alpha \ \max\Big\{ \alpha + \frac{2\beta}{2-a}, c\ \Big[2 + \frac{2\beta}{2-a}\Big]\Big\} + \beta^2\ < 1,$$
where we used $1-b = a$ and $a > 0$.  Since $1 - \beta^2 = \alpha (1 + \beta)$ and $\alpha > 0$, we get $K < 1$ if and only if
$$\max\Big\{ \alpha + \frac{2\beta}{2-a}, c\ \Big[2 + \frac{2\beta}{2-a}\Big]\Big\}  < 1 + \beta.$$
Since $ a < 1$, we get $1 < 2-a$ which implies $\displaystyle \frac{2\beta}{2-a} < 2 \beta$.  Hence
$$\alpha + \frac{2\beta}{2-a} = 1 - \beta + \frac{2\beta}{2-a} < 1 + \beta.$$
Moreover, we have $\beta^2 < 1 < 2-a$ and since $2c < \beta$, we get
$$2c\ \Big[ 1 + \frac{\beta}{2-a}\Big] < \beta \ \Big[ 1 + \frac{\beta}{2-a}\Big] = \beta + \frac{\beta^2}{2-a} < 1 + \beta. $$
Therefore, we have
$$\max\Big\{ \alpha + \frac{2\beta}{2-a}, c\ \Big[2 + \frac{2\beta}{2-a}\Big]\Big\}  < 1 + \beta,$$
which completes the proof that $K < 1$.
\end{proof}

\medskip
In the next section, we discuss the existence of fixed points of $G$-monotone Gregus-\'{C}iri\'{c} mappings defined in weighted graphs.

\section{Fixed Points of $G$-Monotone Gregus-\'{C}iri\'{c}-nonexpansive Mappings}
As we said earlier, the fixed point results obtained for these type of mappings were done in the context of convex metric spaces.  Convexity in metric spaces was initiated by Menger \cite{menger} in 1928.  The terms "metrically convex" and convex metric space" are due to Blumenthal \cite{blumenthal}.   

\medskip

\noindent Let $(M, d)$ be a metric space and let $ \mathbb{R }$ denotes the real line. We
say that a mapping $c : \mathbb{R} \rightarrow M$ is a metric embedding of $ \mathbb{R }$ into
$M$ if $$d(c(s),c(t))=|s-t|,$$ for all real $s,t\in \mathbb{R }$.
\begin{itemize}
  \item[(i)] The image $c([a, b]) \subset M$ of a real interval under a metric
embedding will be called a metric segment, also known as a geodesic in the literature.
  \item[(ii)] Let $x, y \in M$. A metric segment $c([a, b])$ is said to join $x$ and $y$ if $c(a) = x$ and $c(b) = y$ and will be denoted by $[x,y]$.
  \item[(iii)]  $C\subset M $  is said to be convex whenever $[x,y] \subset C$ for any $x, y \in C$.
\end{itemize}
Assume that for any $x$ and $y$ in $M$, there exists a unique metric segment $[x,y]$.  For any $\beta \in [0,1]$, the unique point $z \in [x,y]$ such that
\[d(x,z) = (1-\beta) d(x,y), \;\; \mbox{and}\;\;\; d(z,y) = \beta d(x,y),\]
will be denoted by $\beta x \oplus (1-\beta) y$. Metric spaces having this property are usually called convex metric spaces or geodesic metric spaces \cite{menger, takahashi}.  Moreover, in this section, we assume that
$$d(\beta x \oplus (1-\beta) y, z) \leq \beta d(x,z) + (1-\beta) d(y,z),$$
for any $x, y, z \in M$ and $\beta \in [0,1]$.  This property of the metric convex combination was introduced by Takahashi in \cite{takahashi}.  Normed vector spaces and hyperbolic metric spaces \cite{r-s} are a natural example of convex metric spaces which satisfy all the above properties.\\

Throughout this section, we consider $G$ a transitive weighted digraph with $d$ a metric distance on $V(G)$. We assume that $V(G)$ is a convex metric space such that $G$-intervals are convex.  Recall that a $G$-interval is any of the subsets
$$[x, \rightarrow) = \{y \in V(G);\ (x,y) \in E(G)\}\; or \; (\leftarrow, x] = \{y \in V(G);\ (y,x) \in E(G)\}.$$
Now, we are ready to state the main fixed point result of this work.

\begin{theorem}\label{main}  Let $C$ be a nonempty $G$-complete and convex subset of $V(G)$ which satisfies the Property (*).  Let $T: C \rightarrow C$ be $G$-monotone Gregus-\'{C}iri\'{c}-contraction mapping, i.e. there exist positive numbers $a, b, c $ such that $0 < a <1$, $a+b = 1$ and $c \le \frac{1}{2}$ such that
$$\begin{array}{lll}
d(T(x),T(y))&\le& a\ \max \Big\{d(x,y),c\Big[d(x,T(y))+d(y,T(x))\Big]\Big\}\\
&&\;\;\;\;\; +\ b\ \max \Big\{d(x,T(x)),d(y,T(y))\Big\},
\end{array}$$
for any $x,y\in C$ with $(x,y)\in E(G)$.  Assume that $c < \frac{1}{2}$.  Let $x \in C$ be such that $(x, T(x)) \in E(G)$ (or $(T(x),x) \in E(G)$).  Then $T$ has a fixed point $\omega$ such that $(x, \omega) \in E(G)$ (or $(\omega, x) \in E(G)$).  Moreover, if $\Omega$ is another fixed point of $T$ such that $(x, \Omega) \in E(G)$ (or $(\Omega, x) \in E(G)$), then we must have $\omega = \Omega$.
\end{theorem}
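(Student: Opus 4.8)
The plan is to adapt the Krasnoselskii--Ishikawa scheme used by Djafari-Rouhani and Moradi in [\cite{rouhanim}, Theorem 2.2]: rather than iterating $T$ directly, I would iterate a convex combination built along the orbit of $x$, arranging that the displacements $\delta_n := d(x_n, T(x_n))$ shrink geometrically with ratio exactly the constant $K$ of Lemma \ref{lemma4}. Throughout I assume $(x, T(x)) \in E(G)$; the other case is completely symmetric, obtained by exchanging the roles of the $G$-intervals $[u, \rightarrow)$ and $(\leftarrow, u]$. Since $c < \tfrac12$, I first fix $\beta$ with $2c < \beta < 1$ and set $\alpha = 1-\beta$, so that Lemma \ref{lemma4} applies. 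Then I would build $\{x_n\}$ recursively: put $x_0 = x$; given $x_n \in C$ with $(x_n, T(x_n)) \in E(G)$, use Lemma \ref{lemma3} to choose $k_n \geq 1$ with $d(T^{k_n}(x_n), T^{k_n+2}(x_n)) \leq \tfrac{2}{2-a}\,\delta_n$, write $v_n = T^{k_n}(x_n)$, and set $x_{n+1} = \alpha\, v_n \oplus \beta\, T(v_n) \in C$. The point that keeps the recursion (and the applicability of Lemma \ref{lemma3}) alive is that $(x_{n+1}, T(x_{n+1})) \in E(G)$: since $(v_n, T(v_n)) = (T^{k_n}(x_n), T^{k_n+1}(x_n)) \in E(G)$ by $G$-monotonicity, both $v_n$ and $T(v_n)$ lie in the convex $G$-intervals $[v_n, \rightarrow)$ and $(\leftarrow, T(v_n)]$, hence so does the geodesic $[v_n, T(v_n)]$ and in particular $x_{n+1}$; this gives $(v_n, x_{n+1}) \in E(G)$ and $(x_{n+1}, T(v_n)) \in E(G)$; applying $G$-monotonicity to the first yields $(T(v_n), T(x_{n+1})) \in E(G)$, and transitivity with the second yields $(x_{n+1}, T(x_{n+1})) \in E(G)$. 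Likewise $(x_n, v_n) \in E(G)$ (transitivity along the orbit) together with $(v_n, x_{n+1}) \in E(G)$ shows $\{x_n\}$ is $G$-increasing.

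The main work is the estimate $\delta_{n+1} \leq K\,\delta_n$. I would start from the convexity inequality $d(\alpha v_n \oplus \beta T(v_n), z) \leq \alpha\, d(v_n, z) + \beta\, d(T(v_n), z)$ with $z = T(x_{n+1})$, giving $\delta_{n+1} \leq \alpha\, d(v_n, T(x_{n+1})) + \beta\, d(T(v_n), T(x_{n+1}))$, and bound the two distances on the right by repeatedly applying the Gregus-\'{C}iri\'{c} inequality to the comparable pairs $(v_n, x_{n+1})$ and $(x_{n+1}, T(v_n))$, feeding in Lemma \ref{lemma2} (so that $d(T^j(x_n), T^{j+1}(x_n)) \leq \delta_n$ for every $j$) and the choice of $k_n$ from Lemma \ref{lemma3} (so that $d(v_n, T^2(v_n)) \leq \tfrac{2}{2-a}\delta_n$). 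Noting that $T(x_{n+1})$ is comparable both to $T(v_n)$ and to $T^2(v_n)$, the inequality $d(v_n, T(x_{n+1})) \leq d(v_n, T^2(v_n)) + d(T^2(v_n), T(x_{n+1}))$ is where the factor $\tfrac{2}{2-a}$ of Lemma \ref{lemma3}, and hence the term $\tfrac{2\beta}{2-a}$ in $K$, enters. Collecting all the resulting inequalities and solving for $\delta_{n+1}$ should yield $\delta_{n+1} \leq K\,\delta_n$ with $K = \alpha a \max\{\alpha + \tfrac{2\beta}{2-a},\, c[2 + \tfrac{2\beta}{2-a}]\} + \beta^2 a + b$, which is $< 1$ by Lemma \ref{lemma4}; hence $\delta_n \to 0$.

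It then remains to pass to the limit and treat uniqueness. Since $\{x_n\}$ is $G$-increasing and $G$ is transitive, any two of its terms are comparable, so Lemma \ref{lemma1} gives $d(x_n, x_m) \leq \tfrac{2-a}{1-a}(\delta_n + \delta_m)$; as $\delta_n \to 0$ the sequence is Cauchy, hence convergent to some $\omega \in C$ by $G$-completeness of $C$. Property (*) together with transitivity (the remark after Definition \ref{Jachymski-*}) gives $(x_n, \omega) \in E(G)$ for all $n$, in particular $(x, \omega) \in E(G)$; and $T(x_n) \to \omega$ because $d(T(x_n), \omega) \leq \delta_n + d(x_n, \omega) \to 0$. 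Writing the Gregus-\'{C}iri\'{c} inequality for $(x_n, \omega)$ and letting $n \to \infty$ (using continuity of $d$) would give $d(\omega, T(\omega)) \leq (ac + b)\,d(\omega, T(\omega))$, and since $ac + b \leq \tfrac{a}{2} + b = 1 - \tfrac{a}{2} < 1$ this forces $T(\omega) = \omega$. For uniqueness, if $\Omega$ is a fixed point with $(x, \Omega) \in E(G)$, an induction shows $(x_n, \Omega) \in E(G)$ for all $n$: if $(x_n, \Omega) \in E(G)$, then $G$-monotonicity gives $(T^j(x_n), \Omega) = (T^j(x_n), T^j(\Omega)) \in E(G)$ for every $j$, so $v_n$ and $T(v_n)$ lie in the convex $G$-interval $(\leftarrow, \Omega]$, whence $x_{n+1} \in (\leftarrow, \Omega]$; then Lemma \ref{lemma1} gives $d(x_n, \Omega) \leq \tfrac{2-a}{1-a}\,\delta_n \to 0$, so $\omega = \Omega$.

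I expect the main obstacle to be the decay estimate: one has to choose $\beta$ carefully inside the window $(2c, 1)$ and manage the chain of triangle, convexity, and Gregus-\'{C}iri\'{c} inequalities so that the constant that emerges is precisely the $K$ of Lemma \ref{lemma4}, and not a coarser bound that could fail to be $< 1$. A secondary but essential subtlety is that $x_{n+1}$ must be taken between the two \emph{consecutive} iterates $T^{k_n}(x_n)$ and $T^{k_n+1}(x_n)$: this is the only placement for which transitivity and convexity of the $G$-intervals combine to force $(x_{n+1}, T(x_{n+1})) \in E(G)$, which is exactly what allows Lemma \ref{lemma3} to be re-applied at the next step and the induction to proceed.
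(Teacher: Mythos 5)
Your overall architecture coincides with the paper's: interpolate along the orbit using Lemma \ref{lemma3} and the $\beta$ of Lemma \ref{lemma4}, show the displacement decays by the factor $K$, get Cauchyness from Lemma \ref{lemma1}, pass to the limit with Property (*), and prove uniqueness with Lemma \ref{lemma1} again; your convergence, fixed-point and uniqueness paragraphs are essentially the paper's. The gap is in the decay estimate, and it comes from where you place the interpolation point relative to the index supplied by Lemma \ref{lemma3}. You take $x_{n+1} = \alpha\, v_n \oplus \beta\, T(v_n)$ with $v_n = T^{k_n}(x_n)$ and $d(v_n, T^2(v_n)) \le \frac{2}{2-a}\,\delta_n$, and you control the term $\alpha\, d(v_n, T(x_{n+1}))$ of the convexity bound via the triangle inequality $d(v_n, T(x_{n+1})) \le d(v_n, T^2(v_n)) + d(T^2(v_n), T(x_{n+1}))$. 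That step places the quantity $\frac{2}{2-a}\,\delta_n$ \emph{additively, with coefficient $\alpha$, outside} any Gregus-\'{C}iri\'{c} max. Carrying your chain through (bounding $d(T(v_n),T(x_{n+1}))$ by the contraction applied to $(v_n,x_{n+1})$, bounding $d(T^2(v_n),T(x_{n+1}))$ by the contraction applied to $(x_{n+1},T(v_n))$, and assuming $\delta_{n+1}>\delta_n$ for contradiction) yields a coefficient of at least $\frac{2\alpha}{2-a} + \alpha^2 a + \beta^2 a + b$, which tends to $1 + 2\alpha^2 > 1$ as $a \to 1^-$ for any fixed $\beta<1$. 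So the inequalities you describe do not produce the $K$ of Lemma \ref{lemma4}, and in general do not produce any constant $<1$.

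The repair is the off-by-one shift the paper uses: with $n$ the index from Lemma \ref{lemma3}, set $z = \alpha\, T^{n+1}(x) \oplus \beta\, T^{n+2}(x)$, i.e., interpolate between the two iterates \emph{following} the anchor $T^{n}(x)$ of the Lemma \ref{lemma3} bound. Then both terms of $d(z,T(z)) \le \alpha\, d(T^{n+1}(x),T(z)) + \beta\, d(T^{n+2}(x),T(z))$ have the form $d(T(u),T(z))$ with $u \in \{T^{n}(x),\, T^{n+1}(x)\}$ comparable to $z$, no bare triangle inequality is needed, and the factor $\frac{2}{2-a}$ enters only through $d(T^{n}(x),z) \le \big[\alpha + \frac{2\beta}{2-a}\big]\, d(x,T(x))$ \emph{inside} the $a\max\{\cdot\}$, with outer coefficient $\alpha$ --- precisely the term $\alpha a \big[\alpha + \frac{2\beta}{2-a}\big]$ that Lemma \ref{lemma4} was designed to absorb. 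Note also that your closing claim --- that placing $x_{n+1}$ between $T^{k_n}(x_n)$ and $T^{k_n+1}(x_n)$ is \emph{the only} placement compatible with the edge bookkeeping --- is not correct: $T^{n+1}(x)$ and $T^{n+2}(x)$ are also consecutive iterates, and the identical convexity, monotonicity and transitivity argument gives $(z,T(z)) \in E(G)$ and $(T^{n}(x),z)\in E(G)$ there as well; it is the metric estimate, not the edge structure, that forces the paper's choice.
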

\begin{proof} Without loss of generality, we assume that $(x, T(x)) \in E(G)$ and $x$ is not a fixed point of $T$.  In this case, we have $(T^n(x), T^{n+1}(x)) \in E(G)$, for any $n \in \mathbb{N}$.  Lemma \ref{lemma3} implies the existence of $n \geq 1$ such that
$$d(T^n(x),T^{n+2}(x)) \leq \frac{2}{2-a}\ d(x,T(x)).$$
Let $\beta < 1$ be the number obtained in Lemma \ref{lemma4}.  Set
$$z = \alpha\ T^{n+1}(x) \oplus \beta\ T^{n+2}(x) \in C$$
since $C$ is convex.  Using the convexity of the $G$-intervals, we have $(T^{n+1}(x), z) \in E(G)$ and $(z, T^{n+2}(x)) \in E(G)$.  Since $T$ is $G$-monotone and $G$ is transitive, we conclude that $(z, T(z)) \in E(G)$ and $(T^n(x), z) \in E(G)$.  Moreover, we have
$$d(z,T(z)) \leq \alpha\ d(T^{n+1}(x), T(z)) + \beta\ d(T^{n+2}(x),T(z)).$$
Hence
$$\begin{array}{lll}
d(T^{n+1}(x), T(z)) &\leq& a \ \max \Big\{d(T^n(x), z), c\ \Big[d(T^{n+1}(x), z) + d(T^n(x), T(z))\Big]\Big\}\\
&&\;\;\;\;\;\;\;\; +\ b \ \max\Big\{d(T^n(x), T^{n+1}(x)), d(z,T(z))\Big\},
\end{array}$$
and
$$\begin{array}{lll}
d(T^{n+2}(x), T(z)) &\leq& a \ \max \Big\{d(T^{n+1}(x), z), c\ \Big[d(T^{n+2}(x), z) + d(T^{n+1}(x), T(z))\Big]\Big\}\\
&&\;\;\;\;\;\;\;\; +\ b \ \max\Big\{d(T^{n+1}(x), T^{n+2}(x)), d(z,T(z))\Big\}.
\end{array}$$
First note that we have
$$\begin{array}{lll}
d(T^n(x), z) &\le& \alpha \ d(T^n(x), T^{n+1}(x)) + \beta\ d(T^n(x), T^{n+2}(x))\\
&\le & \alpha \ d(x, T(x)) + \beta\ \frac{2}{2-a} \ d(x, T(x)),
\end{array}$$
and
$$\begin{array}{lll}
d(T^{n+1}(x), z) + d(T^n(x), T(z)) &\le& \beta \ d(T^{n+1}(x), T^{n+2}(x)) + d(T^n(x), z) + d(z,T(z))\\
&\le & \beta \ d(x, T(x)) + \alpha\ d(T^n(x), T^{n+1}(x))\\
 &&\;\;\;\;\;\; +\ \beta\ d(T^n(x), T^{n+2}(x)) + d(z,T(z)),\\
&\le & d(x, T(x)) + \beta\ \frac{2}{2-a}\ d(x, T(x)) + d(z, T(z)),
\end{array}$$
which implies
$$\begin{array}{lll}
d(T^{n+1}(x), T(z)) &\leq& a \ \max \Big\{\Big[\alpha + \frac{2 \beta}{2-a}\Big]\ d(x, T(x)), c\ \Big[\Big(1 + \frac{2\beta}{2-a}\Big)\ d(x, T(x)) \\
&&\;\;\; + d(z, T(z))\Big]\Big\} +\ b \ \max\Big\{d(T^n(x), T^{n+1}(x)), d(z,T(z))\Big\}\\
&\le & a \ \max \Big\{\Big[\alpha + \frac{2 \beta}{2-a}\Big]\ d(x, T(x)), c\ \Big[\Big(1 + \frac{2\beta}{2-a}\Big)\ d(x, T(x)) \\
&&\;\;\; + d(z, T(z))\Big]\Big\} +\ b \ \max\Big\{d(x, T(x)), d(z,T(z))\Big\}.
\end{array}$$
Similarly, we have
$$\begin{array}{lll}
d(T^{n+2}(x), T(z)) &\leq& a \ \max \Big\{d(T^{n+1}(x), z), c\ \Big[d(T^{n+2}(x), z) + d(T^{n+1}(x), T(z))\Big]\Big\}\\
&&\;\;\;\;\;\;\;\; +\ b \ \max\Big\{d(T^{n+1}(x), T^{n+2}(x)), d(z,T(z))\Big\}\\
&\le& a \ \max \Big\{\beta\ d(T^{n+1}(x), T^{n+2}(x)), c\ \Big[\alpha\ d(T^{n+2}(x), T^{n+1}(x)) \\
&&  +\ d(T^{n+1}(x),z) + d(z, T(z))\Big]\Big\} + b \ \max\Big\{d(x, T(x)), d(z,T(z))\Big\}\\
&\le& a \ \max \Big\{\beta\ d(x, T(x)), c\ \Big[\alpha\ d(x, T(x)) +\beta\ d(T^{n+1}(x),T^{n+2}(x)) \\
&&  + d(z, T(z))\Big]\Big\} + b \ \max\Big\{d(x, T(x)), d(z,T(z))\Big\}\\
&\le& a \ \max \Big\{\beta\ d(x, T(x)), c\ \Big[ d(x,T(x))+ d(z, T(z))\Big]\Big\} \\
&&\;\;\;\;\;\;\;\;   + b \ \max\Big\{d(x, T(x)), d(z,T(z))\Big\}.\\
\end{array}$$
Since
$$d(z,T(z)) \leq \alpha\ d(T^{n+1}(x), T(z)) + \beta\ d(T^{n+2}(x),T(z)),$$
we get
$$\begin{array}{lll}
d(z, T(z)) &\leq& \alpha\ a \ \max \Big\{\Big[\alpha + \frac{2 \beta}{2-a}\Big]\ d(x, T(x)), c\ \Big[\Big(1 + \frac{2\beta}{2-a}\Big)\ d(x, T(x)) + d(z, T(z))\Big]\Big\}\\
&&\;\;\; +\ \beta\ a \ \max \Big\{\beta\ d(x, T(x)), c\ \Big[ d(x,T(x))+ d(z, T(z))\Big]\Big\}\\
&&\;\;\; +\ b \ \max\Big\{d(x, T(x)), d(z,T(z))\Big\}.
\end{array}$$
Assume that $d(x,T(x)) < d(z,T(z))$.  Then, we must have
$$\begin{array}{lll}
d(z, T(z)) &<& \alpha\ a \ \max \Big\{\Big[\alpha + \frac{2 \beta}{2-a}\Big], c\ \Big[\Big(1 + \frac{2\beta}{2-a}\Big) + 1\Big]\Big\} d(z,T(z)\\
&&\;\;\; +\ \beta\ a \ \max \{\beta, 2\ c\}\ d(z,T(z)) +\ b \ d(z,T(z)).
\end{array}$$
Since $2c < \beta$, we get
$$d(z, T(z)) < \Big[\alpha\ a \ \max \Big\{\alpha + \frac{2 \beta}{2-a}, c\ \Big(2 + \frac{2\beta}{2-a}\Big)\Big\} +\ \beta^2\ a +\ b \Big]\ d(z,T(z)).$$
Using Lemma \ref{lemma4}, we know that
$$K = \alpha\ a \ \max \Big\{\alpha + \frac{2 \beta}{2-a}, c\ \Big(2 + \frac{2\beta}{2-a}\Big)\Big\} +\ \beta^2\ a +\ b < 1,$$
which implies $d(z,T(z)) < K \ d(z,T(z))$ an obvious contradiction.   Therefore, we must have $d(z,T(z)) \leq d(x,T(x))$.  Hence
$$d(z, T(z)) \le \Big[\alpha\ a \ \max \Big\{\alpha + \frac{2 \beta}{2-a}, c\ \Big(2 + \frac{2\beta}{2-a}\Big)\Big\} +\ \beta^2\ a +\ b \Big]\ d(x,T(x)),$$
i.e. $d(T(z),z) \le K\ d(x,T(x))$.  By induction, we will construct a sequence $\{z_n\}$ in $C$ such that
\begin{enumerate}
\item[(i)] $z_0 = x$ and $z_1$ is the point constructed before;
\item[(ii)] $(z_n,z_{n+1}) \in E(G)$, for any $n \in \mathbb{N}$;
\item[(iii)] $d(z_{n+1},T(z_{n+1})) \leq K\ d(z_n,T(z_n))$, for any $n \in \mathbb{N}$.
\end{enumerate}
In particular, we have $d(z_{n+1},T(z_{n+1})) \leq K^n\ d(x,T(x))$, for any $n \in \mathbb{N}$.  Since $G$ is transitive, then $(z_n, z_m) \in E(G)$ for any $n \le m$.  Using Lemma \ref{lemma1}, we get
$$d(z_n, z_m) \leq \frac{2-a}{1-a}\ \Big(d(z_n, T(z_n)) + d(z_m, T(z_m))\Big).$$
Since $K < 1$, we conclude that $\{z_n\}$ is Cauchy and $G$-increasing.  Hence it is convergent some point $\omega \in C$ because $C$ is $G$-complete.  Since $C$ satisfies the Property (*), we conclude that $(z_n,\omega) \in E(G)$, for any $n \in \mathbb{N}$.  In particular, we have $(x,\omega) \in E(G)$.  Next, we prove that $\omega$ is a fixed point of $T$.  Since $(z_n,\omega) \in E(G)$, for any $n \in \mathbb{N}$, we get
$$\begin{array}{lll}
d(T(z_n), T(\omega)) &\le & a \max\Big\{d(z_n, \omega), c\ \Big[d(z_n, T(\omega)) + d(T(z_n), \omega)\Big]\Big\}\\
&&\;\;\;\; +\ b \ \max\Big\{d(z_n, T(z_n)), d(\omega, T(\omega))\Big\}.
\end{array}$$
Since $\lim\limits_{n \rightarrow +\infty} d(z_n, T(z_n)) = \lim\limits_{n \rightarrow +\infty} d(z_n, \omega) = 0$, we get $\lim\limits_{n \rightarrow +\infty} d(T(z_n), \omega) = 0$, which implies
$$d(\omega, T(\omega)) \le  a \max\Big\{0, c\ \Big[d(\omega, T(\omega)) + 0\Big]\Big\} +\ b \ \max\Big\{0, d(\omega, T(\omega))\Big\},$$
i.e. $d(\omega, T(\omega)) \le  a\ c\ d(\omega, T(\omega))  +\ b \ d(\omega, T(\omega))$.  Since $ac + b < a+b  = 1$, we conclude that $d(\omega, T(\omega)) = 0$, i.e. $T(\omega) = \omega$.  Finally, let $\Omega$ be another fixed point of $T$ such that $(x, \Omega) \in E(G)$.  Since $T$ is $G$-monotone, we get $(T^n(x), \Omega) \in E(G)$.  Using the convexity of the $G$-intervals, we get $(z_n, \Omega) \in E(G)$ for any $n \in \mathbb{N}$.  Using Lemma \ref{lemma1}, we get
$$d(z_n, \Omega) \leq \frac{2-a}{1-a} \ \Big(d(z_n,T(z_n)) + d(\Omega, T(\Omega))\Big) = \frac{2-a}{1-a} \ d(z_n,T(z_n)),$$
for any $n \in \mathbb{N}$. If we let $n \rightarrow +\infty$, we conclude that $\{z_n\}$ converges to $\Omega$.  the uniqueness of the limit implies that $\omega = \Omega$.
\end{proof}

\medskip
\begin{remark}  If we assume that $a+b < 1$ and $c \le \frac{1}{2}$, then the map $T$ is a quasi-contraction mapping \cite{ciric-74}.  In this case, Theorem \ref{main} is similar to the main fixed point result found in \cite{monther1, bachar-amine} without any convexity assumption on the weighted graph.
\end{remark}

\medskip\medskip\medskip

\noindent {\large{\textbf{Acknowledgements}}}
\medskip \\
\noindent The authors would like to acknowledge the support provided by the Deanship of Scientific Research at King Fahd University of Petroleum \& Minerals for funding this work through project No. IN141040.

\medskip\medskip


\end{document}